\theoremstyle{plain}
\newtheorem{theorem}{Theorem}[section]
\newtheorem{corollary}[theorem]{Corollary}
\newtheorem{lemma}[theorem]{Lemma}
\newtheorem{proposition}[theorem]{Proposition}
\theoremstyle{definition}
\newtheorem{definition}[theorem]{Definition}
\theoremstyle{definition}
\newtheorem{example}[theorem]{Example}
\newtheorem{remark}[theorem]{Remark}
\def\R{\mathbb{R}}
\def\setminus{/}
\def\be{\begin{equation}}
\def\ee{\end{equation}}
\def\m{\mathfrak{m}}
\def\p{\partial}
\newcounter{mnotecount}[section]
\begin{document}

\title{A Positive Mass Theorem for Manifolds with Boundary}

\author[$\dag$]{Sven Hirsch}
\author[*]{Pengzi Miao}

\affil[$\dag$]{Department of Mathematics, Duke University, Durham, NC}
\affil[*]{Department of Mathematics, University of Miami, Coral Gables, FL}

\date{}

\maketitle

\begin{abstract}
\noindent
We derive a positive mass theorem for asymptotically flat manifolds with  boundary whose mean curvature satisfies 
a sharp estimate involving the conformal Green's function.
The theorem also holds if the conformal Green's function is replaced by the standard Green's function 
for the Laplacian operator.
As an application, we obtain an inequality relating the mass and harmonic functions 
that generalizes H.~Bray's mass-capacity inequality
in his proof of the Riemannian Penrose conjecture. 
\end{abstract}

\section{Introduction}

One of the fundamental 
results in mathematical relativity is the Riemannian positive mass theorem:

\begin{theorem}[Theorem 5.3 in \cite{PMT}] \label{thm-pmt-recent}
Suppose $(M^n,g)$ is an $n$-dimensional, $ n \ge 3$, 
complete,  asymptotically flat manifold with non-negative scalar curvature. Then the ADM mass  of $(M, g)$ is non-negative and is zero  if and only if $(M^n,g)$ is isometric to the Euclidean space $(\R^n,\delta)$.
\end{theorem}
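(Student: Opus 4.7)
The plan is to prove this by induction on the dimension $n$, following the minimal hypersurface strategy of Schoen and Yau. I would first verify the base case $n=3$ separately, and then show that a counterexample in dimension $n$ would produce a counterexample in dimension $n-1$. The main tools are a Plateau-type construction of an area-minimizing hypersurface asymptotic to a coordinate plane, together with the stability inequality and the Gauss equation.

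Suppose for contradiction that the ADM mass $m$ of $(M,g)$ is negative. After a density reduction that replaces $g$, outside a large compact set, by a conformally flat metric of Schwarzschild type while changing the mass by at most $\varepsilon$, I would observe that sufficiently far out in the asymptotic end the coordinate hyperplanes serve as mean-convex barriers with the correct orientation when $m<0$. Solving a Plateau problem between two such barriers, and letting them escape to infinity, produces a complete, stable, area-minimizing hypersurface $\Sigma^{n-1}\subset M$ asymptotic to a fixed coordinate hyperplane.

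The next step is to exploit the stability inequality
\[
\int_\Sigma \bigl(|A|^2 + \Ric_M(\nu,\nu)\bigr)\varphi^2 \, d\mu \;\le\; \int_\Sigma |\nabla\varphi|^2 \, d\mu,
\]
combined with the Gauss equation relating $\Ric_M(\nu,\nu)$ and $|A|^2$ to the scalar curvatures of $M$ and $\Sigma$. Using $R_M\ge 0$, one conformally modifies the induced metric on $\Sigma$ to obtain a metric $\tilde g$ of non-negative scalar curvature; a careful analysis of the asymptotics of the conformal factor shows that $(\Sigma,\tilde g)$ is asymptotically flat with ADM mass strictly less than $m$, contradicting the induction hypothesis. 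For the rigidity statement, one traces through the equality case in all the above inequalities: if $m=0$ then the argument forces $\Ric_g\equiv 0$, and combined with asymptotic flatness a standard argument yields $(M,g)\cong(\R^n,\delta)$.

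The principal obstacle is the regularity of area-minimizers: stable minimal hypersurfaces can carry a singular set of codimension $\ge 7$, so the scheme as sketched works directly only in the range $3\le n\le 7$. Handling the full range $n\ge 3$ requires either the more recent Schoen--Yau technology based on generic slicings through the singular set, Lohkamp's compactification reduction, or, under a spin assumption, Witten's Weitzenb\"ock/spinor argument, each of which sidesteps the dimensional restriction in a different way. I expect this regularity-and-dimension issue, rather than the minimal-hypersurface construction or the Gauss-equation computation, to be the hardest part of the proof.
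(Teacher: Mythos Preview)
The paper does not contain a proof of this theorem. Theorem~\ref{thm-pmt-recent} is stated purely as a background result, cited from Schoen--Yau \cite{PMT} (with earlier low-dimensional proofs in \cite{old,old7} and the spin case in \cite{witten}); the paper then \emph{uses} it as an input to prove its own results (Theorems~\ref{thm-intro} and \ref{thm-sm}) via the corner and point-singularity versions (Theorems~\ref{thm-corner}, \ref{thm-shitam}, Proposition~\ref{prop-corner-pt}). So there is nothing in the paper to compare your proposal against.

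That said, your sketch is a faithful outline of the Schoen--Yau strategy as developed in \cite{old,old7,PMT}: density reduction to a Schwarzschild-type end, barrier construction from coordinate hyperplanes when $m<0$, Plateau-type existence of a stable minimal hypersurface, the stability/Gauss-equation computation, conformal change on $\Sigma$ to nonnegative scalar curvature, and the dimensional induction. You also correctly locate the genuine difficulty in the codimension-$7$ singular set of area minimizers and the need for the more recent slicing technology of \cite{PMT} (or the alternatives of Lohkamp or Witten) to push past $n\le 7$. One minor quibble: in the inductive step what one actually shows is that $(\Sigma,\tilde g)$ has \emph{negative} mass (coming from the negative mass of the ambient Schwarzschild end), not specifically mass $<m$; that is enough to contradict the induction hypothesis.
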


This  result was first proven in low dimensions by R.~Schoen and S.-T.~Yau  \cite{old, old7} using minimal surface methods  and later by E.~Witten  \cite{witten} for spin manifolds exploiting the Bochner-Weitzenb\"{o}ck formula for the Dirac operator (see also \cite{witten2}, \cite{witten3}). 
Recently, R.~Schoen and S.-T.~Yau  \cite{PMT} extended their arguments to arbitrary dimensions
(see  \cite{Lo3, Lo4} by J. Lohkamp for a different approach).

The minimal surface method applies to the case in which the manifold has nonempty  boundary with non-positive mean curvature, i.e. the mean curvature vector vanishes or points toward the infinity (see \cite{old}).
If the  boundary has positive mean curvature, the positivity of the mass is a  more subtle question as the boundary no longer acts as  a barrier for minimal surfaces.
For instance, 
consider a  manifold  $M$ obtained by cutting out a rotationally symmetric ball in a negative-mass 
Schwarzschild manifold.  
In this case, $M$ has negative mass and a zero area singularity (see \cite{bray-jauregui}) 
is shielded by the boundary due to the  relatively large positive mean curvature of $\partial M$. 

The spinor method also adapts to the setting of manifolds with boundary.
If  the boundary mean curvature is non-positive, positivity of the mass was shown in \cite{blackhole pmt} 
(see also \cite{blackhole pmt2}). 
Refining the spinor analysis, M.~Herzlich  \cite{penrose like}  relaxed  the requirement of  non-positive mean curvature  and showed that a condition 
 $H\le 4\sqrt{\frac\pi{|\partial M|}}$ in $3$-dimension implies the positivity of the mass. 
For higher dimensional spin manifolds, M.~Herzlich proved a similar result in \cite{Herzlich3}.

In this work, we consider asymptotically flat  manifolds of  dimensions $n\ge3$, with boundary
allowed to have positive mean curvature. We show that if the mean curvature satisfies 
an upper estimate involving the conformal Green's function, then the manifold has non-negative mass. 
More precisely, we have

\begin{theorem} \label{thm-intro}
Let  $(M^n,g)$ be an $n$-dimensional, asymptotically flat  manifold with non-negative scalar curvature $R$, with  boundary $\Sigma$. Let $H$ be the  mean curvature of $ \Sigma$ with respect to 
the $\infty$-pointing unit normal  $\nu$. 
Let $u$ be the conformal Green's function given by
$$
\left\{ 
\begin{array}{rcc}
\Delta u - \frac{n-2}{4(n-1)}Ru & = & 0  \ \ \mathrm{in} \ M \\
u & \to & 0 \ \ \mathrm{at} \ \infty \\ 
u & = & 1 \ \ \mathrm{at} \ \Sigma .
\end{array}
\right.
$$
Then the estimate 
\be \label{eq-main-bdry-cond}
H(x)\le-\frac{n-1}{n-2}\nabla_\nu u(x) \quad\text{for all $x\in \Sigma$}
\ee
implies positivity of the ADM mass  of $(M^n,g)$;
 moreover,  $(M, g)$ has zero  mass  if and only if  $(M^n,g)$  is isometric to $ (\R^n, \delta) $ minus a round ball.
\end{theorem}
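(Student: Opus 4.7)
The plan is to reduce this to the positive mass theorem for asymptotically flat manifolds with a Lipschitz corner, via a conformal doubling driven by $u$. Define $\tilde g := u^{\frac{4}{n-2}}g$ on $M$. The conformal Laplacian equation satisfied by $u$ gives $R_{\tilde g}\equiv 0$, while $u|_\Sigma = 1$ forces $\tilde g|_\Sigma = g|_\Sigma$. The standard conformal transformation of mean curvature yields, on $\Sigma$ (where $u = 1$),
$$\tilde H \;=\; H + \frac{2(n-1)}{n-2}\partial_\nu u.$$
Since $R\ge 0$, $u$ is subharmonic; as $u|_\Sigma = 1$ is its maximum, the Hopf lemma gives $\partial_\nu u < 0$ on $\Sigma$. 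Combined with the hypothesis this yields
$$H + \tilde H \;=\; 2H + \frac{2(n-1)}{n-2}\partial_\nu u \;\le\; 0.$$

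Next I would analyze the end. Using the standard expansion $u = A|x|^{2-n} + O(|x|^{1-n})$ with $A > 0$, the Kelvin-type change of variables $y = x/|x|^2$ shows $\tilde g \to A^{\frac{4}{n-2}}\delta$ as $y \to 0$, so the asymptotic end of $(M,g)$ compactifies to a single interior point $p_\infty$ in the compact manifold $\tilde M^* := \tilde M\cup\{p_\infty\}$, on which $\tilde g$ is Riemannian, smooth away from $p_\infty$, and of sufficient regularity at $p_\infty$ (after a smoothing argument if required).

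Now form the glued manifold $\hat M := M \cup_\Sigma \tilde M^*$ with piecewise metric $\hat g$ equal to $g$ on $M$ and $\tilde g$ on $\tilde M^*$. Since $u = 1$ on $\Sigma$, $\hat g$ is $C^0$ on $\hat M$, smooth on each side of $\Sigma$, and Lipschitz across $\Sigma$; $\hat M$ is asymptotically flat with a single end (that of $(M,g)$), and its ADM mass coincides with that of $(M,g)$. At the corner, the Miao/Shi--Tam condition for non-negative distributional scalar curvature of $\hat g$ is $H^{\text{out}}_g + H^{\text{out}}_{\tilde g}\ge 0$ with outward normals from each side; since $\nu$ in the theorem is the interior-pointing normal for both $M$ and $\tilde M^*$, this becomes $-H - \tilde H \ge 0$, i.e., $H + \tilde H \le 0$, exactly the inequality already established. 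The positive mass theorem for Lipschitz AF metrics with corners (Miao, Shi--Tam; combined with the general-dimensional Theorem \ref{thm-pmt-recent}) then yields non-negativity of the ADM mass of $(M,g)$.

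For the rigidity case of zero ADM mass, the equality statement of the corner PMT forces $(\hat M,\hat g) \cong (\R^n,\delta)$ and the corner jump to vanish identically, so $H + \tilde H = 0$; in particular $g$ is flat on $M$ and $u^{\frac{4}{n-2}}g$ is flat on $\tilde M^*$. Liouville's theorem on conformal maps in dimension $n \ge 3$ then forces $u$ to arise from an inversion, identifying $\Sigma$ with a round sphere and $(M,g)$ with the exterior of a round ball in $(\R^n,\delta)$. The main obstacles I anticipate are (i) the regularity of $\tilde g$ near $p_\infty$ and across the corner $\Sigma$, which must be adequate to apply the corner PMT and its rigidity case (likely requiring careful smoothing or approximation near $p_\infty$), and (ii) the rigorous combination of Miao/Shi--Tam's corner analysis with Theorem \ref{thm-pmt-recent} in arbitrary dimension without a spin assumption.
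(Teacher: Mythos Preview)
Your proposal is correct and follows essentially the same route as the paper: conformally compactify $(M,u^{4/(n-2)}g)$ at infinity, glue to $(M,g)$ along $\Sigma$, verify the mean-curvature jump condition $H+\tilde H\le 0$, and invoke the corner positive mass theorem. The paper supplies precisely the ingredients you flag as obstacles---Lemmas~\ref{lemma4.1}--\ref{lem-pt} establish the $W^{1,p}$ regularity of $\tilde g$ at the compactification point and Proposition~\ref{prop-corner-pt} merges the corner and point-singularity versions of the PMT---while for rigidity it patches $u$ on $M$ with $1/u$ on $\tilde M$ into a single $C^1$ harmonic function on $\R^n$ (forcing $\Sigma$ to be a round sphere) rather than appealing to Liouville's theorem.
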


By the assumption $ R \ge 0 $ and the maximum principle,  we have an immediate corollary:

\begin{corollary} \label{cor-intro}
The same result as above holds true if one replaces $u$ with the standard Green's function
$$
\left\{ 
\begin{array}{rcc}
\Delta v & = & 0  \ \ \mathrm{in} \ M \\
v & \to & 0 \ \ \mathrm{at} \ \infty \\ 
v & = & 1 \ \ \mathrm{at} \ \Sigma .
\end{array}
\right.
$$
and require
\be \label{condition2}
H(x)\le-\frac{n-1}{n-2}\nabla_\nu v(x) \quad\text{for all $x\in \Sigma$}.
\ee
\end{corollary}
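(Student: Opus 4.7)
The plan is to reduce Corollary~\ref{cor-intro} to Theorem~\ref{thm-intro} by showing that hypothesis~\eqref{condition2} implies hypothesis~\eqref{eq-main-bdry-cond}. The key observation is that $u$ and $v$ share the same boundary value on $\Sigma$ and the same asymptotic value at infinity, while their defining equations differ only by the zeroth-order term $\frac{n-2}{4(n-1)} Ru$. Since $R\ge 0$, a maximum principle comparison should force $v\ge u$ in $M$, and this will translate to an inequality of normal derivatives at $\Sigma$ because both functions coincide there.

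First I would verify $u>0$ in $M$. The operator $L=\Delta-\frac{n-2}{4(n-1)}R$ has non-positive zeroth-order coefficient (as $R\ge 0$), so the strong maximum principle rules out a non-positive interior minimum of $u$; combined with $u=1$ on $\Sigma$ and $u\to 0$ at infinity this yields $u>0$ throughout $M$. Next, set $w:=v-u$. A direct calculation gives
\[
\Delta w=\Delta v-\Delta u=-\frac{n-2}{4(n-1)}Ru\le 0,
\]
so $w$ is superharmonic on $M$, with $w=0$ on $\Sigma$ and $w\to 0$ at infinity. The standard maximum principle on asymptotically flat manifolds then gives $w\ge 0$, i.e.\ $v\ge u$ in $M$.

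Because $v=u=1$ on $\Sigma$ and $\nu$ is the $\infty$-pointing normal (so it points from $\Sigma$ into the interior of $M$), the inequality $w\ge 0$ in $M$ with $w|_\Sigma=0$ yields $\nabla_\nu w\ge 0$, i.e.\ $\nabla_\nu v\ge \nabla_\nu u$ on $\Sigma$. Multiplying by the negative constant $-\frac{n-1}{n-2}$ reverses the inequality, so
\[
-\frac{n-1}{n-2}\nabla_\nu v\le -\frac{n-1}{n-2}\nabla_\nu u\quad\text{on }\Sigma.
\]
Consequently the boundary hypothesis~\eqref{condition2} forces~\eqref{eq-main-bdry-cond}, and Theorem~\ref{thm-intro} applies to give non-negativity of the ADM mass. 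The rigidity conclusion transfers in the same way: zero mass forces $(M,g)\cong (\R^n,\delta)$ minus a round ball via Theorem~\ref{thm-intro}, and on that model $R\equiv 0$, so $u$ and $v$ coincide, making the characterization consistent.

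I do not foresee any real obstacle. The only technical point deserving care is the application of the maximum principle at infinity: one must rule out a negative infimum of $w$ approached asymptotically. This is standard given $w\to 0$ at infinity by exhausting $M$ with compact domains $\Omega_k$, noting that $\inf_{\partial\Omega_k}w\to 0$, and then applying the classical maximum principle for superharmonic functions on each $\Omega_k$.
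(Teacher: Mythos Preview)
Your proposal is correct and follows exactly the route the paper indicates: the paper derives the corollary ``by the assumption $R\ge 0$ and the maximum principle,'' and your argument spells out precisely this comparison $v\ge u$ (hence $\nabla_\nu v\ge\nabla_\nu u$ on $\Sigma$) to reduce \eqref{condition2} to \eqref{eq-main-bdry-cond}.
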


In \cite{penrose2}, H.~Bray proved the following theorem which plays a key role in his proof of the Riemannian 
Penrose inequality.

\begin{theorem}[Bray \cite{penrose2}] \label{thm-bray}
Let $(M^3, g)$ be a complete, asymptotically flat $3$-manifold with nonnegative scalar curvature, 
 with boundary $\Sigma$ which has zero mean curvature.
Let $\varphi (x) $ be a function on $(M^3, g)$ which satisfies
$$
\left\{ 
\begin{array}{rcc}
\Delta \varphi & = & 0  \ \ \mathrm{in} \ M \\
\varphi & \to & 1 \ \ \mathrm{at} \ \infty \\ 
\varphi & = & 0 \ \ \mathrm{at} \ \Sigma .
\end{array}
\right.
$$
Then 
\be \label{eq-bray}
\m  \ge \mathcal C ,
\ee
 where $ \m$ is the ADM mass of $(M^3, g)$ and $ \mathcal C > 0 $ is the constant in the asymptotic expansion
$$
\varphi (x) = 1 - \frac{\mathcal C}{ |x| } + o(|x|^{-1} ) , \ \mathrm{as} \ x \to \infty. 
$$
Moreover, equality in \eqref{eq-bray} holds if and only if $(M^3, g)$ is isometric to 
a spatial Schwarzschild manifold outside its horizon. 
\end{theorem}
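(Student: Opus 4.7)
The plan is to derive Bray's inequality directly from Corollary~\ref{cor-intro}, applied to a harmonic conformal deformation of $g$ engineered so that the boundary estimate \eqref{condition2} holds with equality and the ADM mass is reduced by exactly $\mathcal{C}$.

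The central construction would be the harmonic conformal factor $u = \tfrac12(1+\varphi)$, which satisfies $u \equiv 1/2$ on $\Sigma$, admits the expansion $u = 1 - \mathcal{C}/(2|x|) + o(|x|^{-1})$ at infinity, and obeys $u \ge 1/2$ on $M$. I would then set $\bar g = u^{4} g$. The three-dimensional transformation rule $R_{\bar g}\, u^{5} = Ru - 8\Delta u = Ru$ shows $R_{\bar g} \ge 0$, and $(M,\bar g)$ is smooth and asymptotically flat; the standard conformal mass formula yields $\bar\m = \m - \mathcal{C}$. Since $\varphi$ and $u$ are both $g$-harmonic, a direct computation gives $\Delta_{\bar g}(v/u) = 0$ for $v = 1 - \varphi$, so that
\[
\bar v \,=\, \frac{1-\varphi}{1+\varphi}
\]
is the $\bar g$-harmonic function with $\bar v \equiv 1$ on $\Sigma$ and $\bar v \to 0$ at infinity.

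To verify hypothesis \eqref{condition2} of Corollary~\ref{cor-intro} for $(M,\bar g)$ I would use $\nu_{\bar g} = u^{-2}\nu$ together with the three-dimensional mean curvature formula $H_{\bar g} = u^{-2}(H + 4 u^{-1}\nabla_\nu u)$, combined with $H \equiv 0$ and $\varphi|_\Sigma \equiv 0$. A direct boundary calculation then yields
\[
H_{\bar g} \,=\, 16\,\nabla_\nu \varphi \,=\, -2\,\nabla_{\nu_{\bar g}}\bar v ,
\]
so \eqref{condition2} holds with equality (the coefficient $(n-1)/(n-2)$ equals $2$ in dimension three). Corollary~\ref{cor-intro} then gives $\bar\m \ge 0$, and combining with $\bar\m = \m - \mathcal{C}$ yields $\m \ge \mathcal{C}$. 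In the rigidity case $\m = \mathcal{C}$ one has $\bar\m = 0$, so by Corollary~\ref{cor-intro} $(M,\bar g)$ is isometric to $\R^3$ minus some round ball of radius $r_0$; undoing the conformal change then identifies $g$ with $(1 + r_0/|x|)^4 \delta$ on $|x| \ge r_0$, the spatial Schwarzschild metric of mass $\mathcal{C} = 2 r_0$ outside its horizon.

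The main obstacle I anticipate is the identification of $\bar v = v/(2u)$ as the standard $\bar g$-harmonic function required by Corollary~\ref{cor-intro}: this rests on the specific cancellation that $u$ being itself $g$-harmonic produces in the transformation law for the Laplacian, and so is not a feature of a generic conformal change. The remaining steps --- the conformal mass formula under the asymptotic decay of $u$, and the transformation rules for $\nu$ and $H$ --- are standard. The key design point is that $u = \tfrac12(1+\varphi)$ is the unique normalization of $\varphi$ which simultaneously saturates the boundary inequality of Corollary~\ref{cor-intro} and shifts the mass by $-\mathcal{C}$, so Bray's sharp inequality drops out immediately.
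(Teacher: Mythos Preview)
Your proposal is correct and coincides with the paper's own approach: the paper derives Bray's theorem as the special case $c=0$, $n=3$ of its Theorem~\ref{thm-sm}, whose proof uses precisely your conformal factor $u=\tfrac12(1+\varphi)$ (equivalently $w^{-1}$ in the paper's notation), the deformed metric $\tilde g=u^{4/(n-2)}g$, and the $\tilde g$-harmonic function $\frac{1-\varphi}{1+\varphi}$, then invokes Corollary~\ref{cor-intro} to conclude $\tilde\m\ge 0$ and hence $\m\ge\mathcal C$. Your boundary computation, the mass shift $\bar\m=\m-\mathcal C$, and the rigidity identification with Schwarzschild all match the paper's argument line by line.
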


Using Corollary \ref{cor-intro}, we obtain the following generalization of Bray's theorem. 

\begin{theorem} \label{thm-sm}
Let $(M^n,g)$ be an $n$-dimensional, asymptotically flat  manifold, with nonnegative scalar curvature, with  boundary $\Sigma$. 
Let $H$ be the  mean curvature of $ \Sigma$ with respect to  the $\infty$-pointing unit normal  $\nu$.
Let $\phi (x) $ be a function on $(M, g)$ which satisfies
$$
\left\{ 
\begin{array}{rcc}
\Delta  \phi & = & 0  \ \ \mathrm{in} \ M \\
\phi & \to & 1 \ \ \mathrm{at} \ \infty \\ 
\phi & = & c \ \ \mathrm{at} \ \Sigma ,
\end{array}
\right.
$$
where $ c > -1 $ is a constant and $ c \neq 1$.
If 
\be \label{eq-bd-c-f}
\frac{2c}{1 - c^2} \frac{\p \phi }{\p \nu} \ge \frac{n-2}{n-1} H,
\ee
then 
\be \label{eq-ms}
 \m  \ge C , 
\ee
where $\m $ is the ADM mass of $(M^n, g)$ and  $ C$ is the constant in
$$
\phi = 1 - \frac{C}{|x|^{n-2} } + o(|x|^{2-n} ), \ \mathrm{as} \ x \to \infty. 
$$
Moreover, equality in \eqref{eq-ms} holds if and only if $(M^n, g)$ is isometric to 
the exterior region outside a rotationally symmetric sphere in an  
$n$-dimensional spatial Schwarzschild manifold, which is  
$$ \left( \R^n \setminus \left\{ | x | < r_0 \right\} , \left( 1 + \frac{\m}{2 |x|^{n-2} } \right)^\frac{4}{n-2} \delta_{ij}  \right) 
$$
for some constants $ r_0 > 0 $ 
\end{theorem}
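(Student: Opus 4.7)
The plan is to reduce Theorem \ref{thm-sm} to Corollary \ref{cor-intro} via a conformal deformation of $g$ that absorbs the constant $C$ into the mass. Motivated by the Schwarzschild equality case, where $\phi$ is a M\"obius transformation of $|x|^{n-2}$, I would set
\[
U = \frac{1+\phi}{2}, \qquad \tilde v = \frac{(1+c)(1-\phi)}{(1-c)(1+\phi)}.
\]
Because $c>-1$, the maximum principle gives $\phi > -1$ on $M$, hence $U>0$. Moreover $U$ is $g$-harmonic with expansion $U = 1 - \tfrac{C}{2|x|^{n-2}} + o(|x|^{2-n})$ at infinity.

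Define $\tilde g = U^{4/(n-2)} g$. Three facts to verify: $\tilde g$ is asymptotically flat with $R_{\tilde g} = U^{-4/(n-2)} R_g \ge 0$ (since $U>0$ is $g$-harmonic); by the standard conformal change of mass formula, $\m_{\tilde g} = \m - C$; and $\tilde v = 1$ on $\Sigma$, $\tilde v \to 0$ at infinity, with $\Delta_{\tilde g}\tilde v = 0$. The harmonicity of $\tilde v$ follows from the transformation rule $L_{\tilde g}\tilde v = U^{-(n+2)/(n-2)} L_g(U\tilde v)$ applied to $U\tilde v = \tfrac{(1+c)(1-\phi)}{2(1-c)}$: since $\phi$ is $g$-harmonic and $R_{\tilde g} = U^{-4/(n-2)} R_g$, the conformal Laplacian identity collapses to $\Delta_{\tilde g}\tilde v = 0$.

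Next I would compute the boundary data on $\Sigma$. Using the standard formulas $\tilde H = U^{-2/(n-2)}\bigl(H + \tfrac{2(n-1)}{n-2} U^{-1}\p_\nu U\bigr)$ and $\p_{\tilde\nu}\tilde v = U^{-2/(n-2)} \p_\nu \tilde v$, together with $U|_\Sigma = \tfrac{1+c}{2}$ and the identity $\p_\nu \tilde v|_\Sigma = -\tfrac{2\p_\nu\phi}{1-c^2}$ obtained by differentiating $\tilde v$, a short algebraic manipulation shows that the hypothesis (\ref{eq-bd-c-f}) is \emph{exactly} equivalent to $\tilde H \le -\tfrac{n-1}{n-2}\p_{\tilde\nu}\tilde v$. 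Corollary \ref{cor-intro} then applies to $(M,\tilde g)$ with Green's function $\tilde v$, giving $\m_{\tilde g}\ge 0$, i.e.\ $\m \ge C$. For the rigidity, equality forces $(M,\tilde g) \cong (\R^n \setminus\{|x| < r_0\},\delta)$, so $\tilde v = (r_0/|x|)^{n-2}$; inverting the formulas for $U$ and $\tilde v$ then expresses $g$ as the Schwarzschild exterior with mass $C$.

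The main obstacle is discovering the pair $(U,\tilde v)$: both functions must satisfy the right PDE (harmonic with respect to $g$ and $\tilde g$ respectively), carry the correct boundary and asymptotic values, and be tuned so that the somewhat unusual combination $\tfrac{2c}{1-c^2}\p_\nu\phi$ appearing in (\ref{eq-bd-c-f}) lines up precisely with the mean curvature inequality $\tilde H \le -\tfrac{n-1}{n-2}\p_{\tilde\nu}\tilde v$ required by the corollary. The rational forms above are essentially uniquely determined by this matching condition, with the Schwarzschild equality case serving as the guiding example.
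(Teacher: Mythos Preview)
Your proposal is correct and is essentially identical to the paper's proof: the paper sets $w=\tfrac{2}{1+\phi}=U^{-1}$, $\tilde g=w^{-4/(n-2)}g=U^{4/(n-2)}g$, and $v=\tfrac{1+c}{1-c}(w-1)$, which coincides with your $\tilde v$, then verifies the same boundary inequality and applies Corollary~\ref{cor-intro}. The only cosmetic difference is that the paper obtains $\tilde\Delta v=0$ from the fact that $\Delta_g U=0\Rightarrow\Delta_{\tilde g}(U^{-1})=0$, whereas you route it through the conformal Laplacian identity applied to $U\tilde v$; both arguments are equivalent.
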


\begin{remark}
The mass $\m$ of the Schwarzschild manifold in the  rigidity statement  of Theorem \ref{thm-sm}
can be arbitrary, in particular $\m $ can be negative. Moreover, if $ \m > 0 $, $r_0 $ can be arbitrary, meaning that
$\Sigma$ can be either outside the Schwarzschild horizon or inside the Schwarzschild horizon.
\end{remark}

\begin{remark}
If $ H \le 0 $, one can take $ c = 0 $. In this case, Theorem \ref{thm-sm} reduces to Theorem \ref{thm-bray}.
\end{remark}

\begin{remark}
An equivalent formulation of Theorem \ref{thm-sm} shows that Corollary \ref{cor-intro}  corresponds to 
a version of  Theorem \ref{thm-sm} if $c=1$.  See Theorem \ref{thm-sm-2} in Section \ref{sec-app} for 
details. 
\end{remark}

\begin{remark}
In application, given a harmonic function $ \phi $ that goes to $1$ at infinity on 
an asymptotically flat manifold with nonnegative scalar curvature, one can consider
the level sets $\Sigma_c = \phi^{-1} (c) $. As long as  condition \eqref{eq-bd-c-f}
holds at some $\Sigma_c$,  one will have an estimate of the mass  in terms of $\phi$.
\end{remark}

We now outline the proof of Theorem \ref{thm-intro}. 
The idea is simply to show the manifold $(M, g)$ in Theorem \ref{thm-intro}
admits a legitimate compact fill-in $(\Omega, \tilde g)$ so that, if 
$(\Omega, \tilde g)$ is glued to $(M, g)$, the resulting manifold satisfies 
the assumptions of the positive mass theorem.
To produce such a fill-in,  one conformally deforms  $(M, g)$ using the given function 
$u$ (or $v$).  The idea of conformally deforming an asymptotically flat manifold 
with boundary to produce a compact piece appeared in the pioneer work of 
G.~Bunting and A.~Masood-ul-Alam \cite{blackhole} and was used by H. Bray \cite{penrose2}
in his proof of the Riemannian Penrose inequality. 
A recent application of this idea to obtain capacity estimates was given by 
C. Mantoulidis, L.-F. Tam and the second author  \cite{capacity fill ins}.

Once Theorem \ref{thm-intro} is proved, Theorem \ref{thm-sm} follows  by considering 
a  conformally deformed metric $ \tilde g =  \left( \frac{1 + \phi }{2} \right)^{\frac{4}{n-2}}  g$ on $M$
and applying  Corollary \ref{cor-intro} to $(M^n, \tilde g)$.

\vspace{.2cm}

The rest of this paper is organized as follows.
We provide some background material in Section 2.
In Section 3, we prove Theorem \ref{thm-intro} for harmonically flat metrics. 
The general case of Theorem \ref{thm-intro} is proved in Section 4.  
In Section \ref{sec-app}, we use Corollary \ref{cor-intro} to prove Theorem \ref{thm-sm}.
In Section \ref{sec-end}, we give some examples and  discussions.

\vspace{.2cm}

{\bf Acknowledgements} 
This work was initiated when the first author was visiting the University of Miami  (UM) 
in October 2018.
He is grateful for the hospitality of the department of mathematics at UM.
He also wants to thank H.~Bray for many encouraging discussions 
and interest in this work. 
Research of the second author is partially supported by  Simons Foundation Collaboration Grant for Mathematicians \#585168.

\section{Prerequisites}
We start by recalling  the definition of a manifold being asymptotically flat (see \cite{PMT, penrose3}
for instance).
Here we only consider manifolds with one end, and the general case can be treated in the same fashion.
\begin{definition}
Let $n\ge3$. A Riemannian manifold $(M^n, g)$  is asymptotically flat if there is a compact set $K\subset M$ such that $M\setminus K$ is diffeomorphic to $ \R^n$ minus a ball and in this coordinate chart, $g$ satisfies
$$
g= \delta +\mathcal O(|x|^{-\tau}), \ 
\partial g=\mathcal O(|x|^{-\tau-1}),\ 
\partial^2g=\mathcal O(|x|^{-\tau-2}),\\
$$
and $ R = \mathcal O(|x|^{-q})$, 
where $\tau>\frac{n-2}2$ and $q>n$. Here $ R$ is the scalar curvature of $g$.
\end{definition}

On an asymptotically flat $(M^n, g)$, the ADM mass  \cite{ADM} $m$ is given by 
$$
\m = \frac{1}{2 (n-1) \omega_{n-1} } \lim_{r \to \infty} \int_{|x| = r}  \left( g_{ij,j} - g _{jj, i} \right) \nu^i,
$$
where $ \omega_{n-1}$ is the volume of the unit sphere in $ \R^n$, and 
the unit normal $\nu$ and volume integral are with respect to the Euclidean metric. 
The fact that $m$ is a geometric invariant of $(M, g)$ was shown by 
Bartnik \cite{witten3} and by Chru\'{s}ciel \cite{Chrusciel} independently.

We next address some regularity questions regarding the positive mass theorem. 
Originally the result was proved for smooth manifolds, but since then there have been also several singular cases, proven in \cite{corners},  \cite{shi tam2},  \cite{lee}, \cite{corners2}, \cite{lee-lefloch},
 \cite{shi tam},  and \cite{singularPMT}.
Combined with  Theorem \ref{thm-pmt-recent}, 
results and proofs in \cite{corners},  \cite{corners2} and  \cite{shi tam}
in particular give  the following theorems:

\begin{theorem}[Theorem 1 in \cite{corners}, Theorem 2 in \cite{corners2}]\label{thm-corner}
Let $(M^n,g)$ be an asymptotically flat manifold.
Suppose $ \Sigma^{n-1} \subset M^n $ is a closed  hypersurface which divides $M$ into an exterior part $M_1$ 
and an interior part $M_2$. Suppose the metric $g$ is smooth up to $ \Sigma$ on both sides of $\Sigma$
and has non-negative scalar curvature away from $\Sigma$.
Let $H_1$, $H_2$ be the mean curvature of $ \Sigma$ with resect to the unit normal pointing to infinity 
in $(M_1, g)$ and $(M_2, g)$, respectively. 
If
$$
H_1\le H_2,
$$
then  the ADM mass of $(M^n , g)$ is non-negative and is zero  if and only if $(M_1, g)$ is isometric 
to $(\R^n\setminus \Omega,\delta)$ for a bounded domain  $\Omega \subset \R^n $ with smooth boundary 
and $(M_2, g)$ is isometric to $(\Omega, \delta)$.
\end{theorem}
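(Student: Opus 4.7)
The plan is to reduce to the smooth positive mass theorem (Theorem \ref{thm-pmt-recent}) by approximating $(M,g)$ with smooth asymptotically flat metrics of nonnegative scalar curvature whose ADM masses provide a lower bound for $\m(g)$. I would follow the two-step strategy of \cite{corners, corners2, shi tam}: first mollify $g$ across $\Sigma$, then conformally perturb to restore nonnegative scalar curvature.

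In Gaussian normal coordinates $(t, y)$ near $\Sigma$, write $g = dt^2 + h(t)$ with $\Sigma = \{t = 0\}$ and $t > 0$ pointing into $M_1$. The tangential metric $h(t)$ is smooth on each side, but in general $\partial_t h(0^+) \neq \partial_t h(0^-)$; the $h(0)$-trace of this jump computes $2(H_1 - H_2) \le 0$. I would construct $g_\epsilon$ by convolving $h(t)$ with a standard mollifier of scale $\epsilon$ in the $t$-variable and keeping $g_\epsilon = g$ outside a fixed tubular neighborhood of $\Sigma$. Then $g_\epsilon$ is globally smooth, agrees with $g$ outside an $\epsilon$-tube of $\Sigma$ (so $\m(g_\epsilon) = \m(g)$), and a direct computation shows $R_{g_\epsilon}\, dv_{g_\epsilon} \to R_g \, dv_g + 2(H_2 - H_1)\, d\sigma \ge 0$ distributionally, with the negative part $(R_{g_\epsilon})_-$ supported in the $\epsilon$-tube and tending to zero in $L^p$ for $p$ near $n/2$.

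Next, I would solve
\[
-\tfrac{4(n-1)}{n-2}\Delta_{g_\epsilon} u_\epsilon + R_{g_\epsilon} u_\epsilon = 0, \qquad u_\epsilon \to 1 \text{ at infinity},
\]
to obtain a positive solution $u_\epsilon = 1 + A_\epsilon |x|^{2-n} + o(|x|^{2-n})$ for small $\epsilon$; solvability and positivity follow from the smallness of $(R_{g_\epsilon})_-$, which renders the conformal Laplacian invertible on appropriate weighted Sobolev spaces. The divergence theorem applied to the equation expresses $A_\epsilon$, up to a positive constant, as $-\int_M R_{g_\epsilon} u_\epsilon \, dv_{g_\epsilon}$; the distributional nonnegativity of $R_{g_\epsilon}$ in the limit, together with $u_\epsilon \to 1$ uniformly, forces $A_\epsilon \to A_* \le 0$. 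The conformally deformed metric $\tilde g_\epsilon := u_\epsilon^{4/(n-2)} g_\epsilon$ is smooth, asymptotically flat, scalar-flat, and has mass $\m(\tilde g_\epsilon) = \m(g) + 2 A_\epsilon$. Applying Theorem \ref{thm-pmt-recent} to $\tilde g_\epsilon$ gives $\m(g) + 2A_\epsilon \ge 0$; combined with $A_\epsilon \le 0$, this yields $\m(g) \ge -2 A_* \ge 0$.

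The main technical obstacle is to make the preceding outline rigorous: one must establish the positivity of $u_\epsilon$ and the quantitative control of $A_\epsilon$ simultaneously, both of which hinge on the sign condition $H_1 \le H_2$ keeping the distributional scalar curvature nonnegative through the mollification. For rigidity, if $\m(g) = 0$, then $\m(\tilde g_\epsilon) \ge 0$ combined with $A_\epsilon \le 0$ forces $A_\epsilon \to 0$, which via the integral formula forces $\int_\Sigma (H_2 - H_1)\, d\sigma = 0$ and hence $H_1 = H_2$ pointwise on $\Sigma$. The distributional scalar curvature of $g$ then has no singular part, so $g$ is a Lipschitz metric with nonnegative scalar curvature in the classical sense on each side; a further smoothing together with the rigidity case of Theorem \ref{thm-pmt-recent} identifies the limit metric with $(\R^n, \delta)$, and the smoothness of $g$ on each side yields the claimed decomposition $(M_1, g) \cong (\R^n \setminus \Omega, \delta)$ and $(M_2, g) \cong (\Omega, \delta)$.
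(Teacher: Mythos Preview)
Your mollification-plus-conformal-deformation argument for $\m(g)\ge 0$ is essentially the approach of \cite{corners}, which is also what the paper invokes (see the proof of Proposition~\ref{prop-corner-pt}, which sketches exactly this for the corner part). So the inequality half of your proposal is fine and aligned with the paper.

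The rigidity argument, however, has a genuine gap. From $\m(g)=0$ together with $A_\epsilon\to A_*\le 0$ you correctly extract $A_*=0$, $R_g\equiv 0$, and $H_1=H_2$. But the final step, ``a further smoothing together with the rigidity case of Theorem~\ref{thm-pmt-recent} identifies the limit metric with $(\R^n,\delta)$'', does not go through: your smoothed metrics $\tilde g_\epsilon$ have mass $\m(g)+2A_\epsilon$, which is only \emph{approximately} zero for finite $\epsilon$, so the rigidity clause of Theorem~\ref{thm-pmt-recent} never applies to any single $\tilde g_\epsilon$. Passing to the limit gives back the nonsmooth $g$, for which Theorem~\ref{thm-pmt-recent} is not stated. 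Note also that $H_1=H_2$ only matches the traces of the second fundamental forms, not the full forms, so $g$ is still merely Lipschitz across $\Sigma$ and you cannot simply declare it smooth.

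This is precisely why the paper (and the statement of Theorem~\ref{thm-corner} itself) cites \cite{corners2} for the equality case: McFeron--Sz\'ekelyhidi replace the mollification/conformal step by Ricci flow, which smooths the corner while keeping the ADM mass \emph{exactly} constant. One then applies the rigidity of Theorem~\ref{thm-pmt-recent} to the flowed metric at any positive time. The paper makes this point explicitly in the proof of Proposition~\ref{prop-corner-pt}. Your outline needs that ingredient (or an equivalent mass-preserving regularization) to close the rigidity case.
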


\begin{theorem}[Theorem 7.2 in \cite{shi tam}]\label{thm-shitam}
Let $(M^n,g)$ be an asymptotically flat manifold such that  $g\in W^{1,p}_{\operatorname{loc}}$ for some $p>n$ and is  smooth with non-negative scalar curvature away from a point $q$.
Then  $m\ge0$ and $ m=0 $ only if   $M^n$ is diffeomorphic to $\R^n$ and $g$ is flat away from $q$.
\end{theorem}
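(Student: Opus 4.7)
The plan is to reduce the singular statement to the smooth positive mass theorem (Theorem \ref{thm-pmt-recent}) by approximating $g$ near the singular point $q$ with smooth metrics, correcting the scalar curvature by a small conformal deformation, and passing to the limit. The hypothesis $p>n$ yields $C^{0,\alpha}$ regularity of $g$ via Sobolev embedding, which provides the quantitative control that makes the approximation scheme work.

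First I would mollify $g$ inside a small coordinate ball $B_{2r}(q)$ to obtain a family of smooth metrics $g_\epsilon$ on $M$ with $g_\epsilon \equiv g$ outside $B_{2r}(q)$, $g_\epsilon \to g$ in $W^{1,p}_{\operatorname{loc}}$, and whose scalar curvatures $R_\epsilon$ satisfy $R_\epsilon = R \ge 0$ off $B_{2r}(q)$ while the negative parts $R_\epsilon^-$ are supported in a shrinking neighborhood of $q$ and small in the norm dual to the conformal Laplacian. Because $g_\epsilon$ agrees with $g$ near infinity, $(M,g_\epsilon)$ is asymptotically flat of the same order and $m(g_\epsilon)=m(g)$.

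Next, for each sufficiently small $\epsilon$, solve the conformal Laplacian equation
\begin{equation*}
\frac{4(n-1)}{n-2}\,\Delta_{g_\epsilon} u_\epsilon - R_\epsilon u_\epsilon = 0, \qquad u_\epsilon \to 1 \ \text{at infinity},
\end{equation*}
and set $\tilde g_\epsilon := u_\epsilon^{4/(n-2)} g_\epsilon$. A continuity or variational argument, using that $R_\epsilon^-$ is small, produces a positive solution $u_\epsilon$ with $u_\epsilon \to 1$ in $C^0$. The metric $\tilde g_\epsilon$ is smooth with $\tilde R_\epsilon \equiv 0$, and has expansion $u_\epsilon = 1 + A_\epsilon |x|^{2-n} + o(|x|^{2-n})$ at infinity, so that $m(\tilde g_\epsilon) = m(g_\epsilon) + 2A_\epsilon$. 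Theorem \ref{thm-pmt-recent} applied to $(M,\tilde g_\epsilon)$ then gives $m(\tilde g_\epsilon) \ge 0$.

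The heart of the argument is to show $A_\epsilon \to 0$ as $\epsilon \to 0$. Using the Green's function representation, $A_\epsilon$ is a weighted integral of $R_\epsilon u_\epsilon$ over $M$; the uniform convergence $u_\epsilon \to 1$, together with the concentration of $R_\epsilon^-$ at the single point $q$ and the decay of the Newtonian kernel, force this integral to vanish in the limit. Combined with $m(g_\epsilon)=m(g)$, this yields $m(g) \ge 0$. For rigidity, $m(g)=0$ forces $m(\tilde g_\epsilon) \to 0$; invoking the equality case of Theorem \ref{thm-pmt-recent} together with local compactness estimates shows that $g$ is flat on $M \setminus \{q\}$, and a harmonic-coordinate removable-singularity argument (using the $C^{0,\alpha}$ bound) identifies $M$ diffeomorphically with $\R^n$. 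The principal obstacle is precisely the decay $A_\epsilon \to 0$: it demands quantitative control of how mollification distorts the scalar curvature near $q$, which is where the strict inequality $p>n$ enters, providing uniform continuity of $g$ and Harnack-type bounds for $u_\epsilon$.
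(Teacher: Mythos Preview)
The paper does not itself prove Theorem~\ref{thm-shitam}; it is quoted as background from Shi--Tam.  The closest thing in the paper is the proof of Proposition~\ref{prop-corner-pt}, which handles a point singularity (together with a corner) by exactly the scheme you propose: mollify $g$ near $q$ so that $g_\epsilon=g$ outside $B_\epsilon(q)$ with a uniform $W^{1,p}$ bound, observe that this bound forces $\int R_\epsilon^-\to 0$, then conformally deform to nonnegative scalar curvature and apply the smooth positive mass theorem, with the mass converging because the conformal factor tends to~$1$.  For the inequality $m\ge 0$ your outline is correct and coincides with the paper's (and Shi--Tam's) argument.

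Your rigidity step, however, has a genuine gap.  From $m(g)=0$ you only obtain $m(\tilde g_\epsilon)\to 0$, never $m(\tilde g_\epsilon)=0$, so the equality case of Theorem~\ref{thm-pmt-recent} cannot be applied to any of the $\tilde g_\epsilon$.  The phrase ``local compactness estimates'' does not repair this: it would amount to a stability statement for the positive mass theorem, which is a hard open-ended problem and certainly not available off the shelf.  The paper is explicit about this point in the proof of Proposition~\ref{prop-corner-pt}: for rigidity one must switch to the McFeron--Sz\'ekelyhidi argument (and Section~7 of Shi--Tam), which smooths $g$ by running Ricci flow for a short positive time.  The crucial feature of Ricci flow is that the ADM mass is \emph{exactly preserved}, so the flowed metric $g(t)$ is smooth, has nonnegative scalar curvature (by the maximum principle), and still has mass zero; one then applies the rigidity of Theorem~\ref{thm-pmt-recent} to $g(t)$ and passes information back to $g$ as $t\downarrow 0$.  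That is the missing idea in your proposal.
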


\section{The Harmonically Flat Case} \label{sec-H-flat}

As in \cite{penrose2}, we first prove Theorem \ref{thm-intro} 
for the case 
$g=\xi^{\frac 4{n-2}} \delta $ near $\infty$,  where $\xi$ is a Euclidean harmonic function. 
 This property  is often  known as $(M^n,g)$ being harmonically  flat (see \cite{penrose2}).
 
\begin{proof}
Let $(\tilde M^n,\tilde g)$ be the one point compactification of $(M^n,u^{\frac4{n-2}}g)$ where $u$ is the conformal Green's function from Theorem 1.2.
Note that in particular $\tilde g|_\Sigma=g|_\Sigma$ and by the maximum principle $u>0$.
Also, we are able  to extend $\tilde g$ smoothly to the point at $\infty$ as H.~Bray's proof of Theorem 1.4, also see the detailed exposition in section 2 of \cite{capacity fill ins}.
This argument is based on the removable singularity theorem for harmonic functions and thus crucially requires the manifold to be harmonically flat.
Note that in this case $u$ is harmonic near $\infty$.
In the general setting this conformal blow down may produce a singular metric which requires to be smoothed which is performed in section 4 following \cite{shi tam} and \cite{capacity fill ins}. 

The conformally transformed manifold $\tilde M$ has zero scalar curvature $\tilde R$ due to the well known formula
\begin{align}\label{3.1}
\tilde R=u^{-\frac 4{n-2}}R-\frac{4(n-1)}{n-2}u^{-\frac{4}{n-2}-1}\Delta u.
\end{align}
Moreover, the mean curvature transforms under conformal transformation and change of unit normal via
\begin{align}\label{3.2}
\tilde H=-u^{\frac{-2}{n-2}}H-\frac {2(n-1)}{n-2}u^{\frac{-n}{n-2}}\nabla_\nu u.
\end{align}
Note that we have to change the sign of the unit normal of $\tilde \Sigma$ so that it corresponds with the normal of $\Sigma$ after gluing $\tilde M$ into $M$ as carried out below.
Identity \eqref{3.2} leads in combination with $u|_\Sigma=1$ to
\begin{align}\label{3.3}
\tilde H=-H-\frac {2(n-1)}{n-2}\nabla_\nu u.
\end{align}
Next we glue $M$ and $\tilde M$ together along $\Sigma$ to obtain a manifold $\hat M$ with corner $\Sigma$.
More precisely we define $(\hat M,\hat g)$ by $\hat M=(M\sqcup \tilde M)/_{\Sigma\sim \tilde \Sigma}$ and $\hat g|_M=g$, $\hat g|_{\tilde M}=\tilde g$. Then $\hat M$ is asymptotically flat, has non-negative scalar curvature and no boundary.
Furthermore, we have due to \eqref{eq-main-bdry-cond} and \eqref{3.3}
\begin{align*}
\tilde H=-H-\frac {2(n-1)}{n-2}\nabla_\nu u\ge H.
\end{align*}
Thus all the assumptions of the positive mass theorem with corners, Theorem \ref{thm-corner}, are satisfied and we obtain $\hat \m\ge0$.
Since we did not change the asymptotic behavior of the metric during the construction of $(\hat M,\hat g)$ we have $\m=\hat \m\ge0$ as desired.

Next, we apply the rigidity part of the positive mass theorem with corners \cite{corners}, \cite{corners2} to deduce that in the case of $\m=0$, $(\hat M, \hat g)$ is isometric to $(\R^n,\delta)$ where $\delta$ the standard metric.
Thus $(M^n,g)$ is isometric to $(R^n\setminus\Omega, \delta)$, and $(\tilde M^n,\tilde g)$ is isometric to $(\Omega,\delta)$ where $\Omega\subset \R^n$ is some smooth, open set.
Our goal is to show that $\Omega=B_\rho(x_0)$, $\rho>0$, where $x_0\in \tilde M$ is the compactified point from $\infty$.
We begin by observing that the function 
$$
\tilde u:=\frac1u
$$
is harmonic on $\tilde M$ with $\tilde u|_\Sigma=1$ and 
$$
\nabla_{\tilde \nu}\tilde u=-\nabla_\nu\tilde u=\nabla_\nu u.
$$
Again, note how the unit normal of $\Sigma$ changes sign after the conformal transformation.
Therefore, we can define $\hat u\in C^1(\hat M)$ by
$$
\begin{cases}
\hat u(x)=u(x)\quad\text{for $x\in M$},\\
\hat u(x)=\tilde u(x)\quad\text{for $x\in \tilde M$}.
\end{cases}
$$
Since $\hat u$ is harmonic outside $\Sigma$ and $C^1$ across $\Sigma$, it follows from standard PDE arguments that $\hat u$ is also harmonic across $\Sigma$. 
Thus, the uniqueness of solutions to the Laplace equation implies
$$
\hat u(x)=\frac {\rho^{n-2}}{|x-x_0|^{n-2}}
$$
for some $\rho>0$ which results in $\Sigma=S_\rho(x_0)$ as desired.
Alternatively, we could also apply directly Theorem 5.1 of \cite{capacity fill ins} to $(\tilde M,\tilde g,\tilde u)$.
\end{proof}

\section{The General Case}

In this section, we prove Theorem \ref{thm-intro} in the general case.
Let $u$ be the conformal Green's function from Theorem \ref{thm-intro}, i.e. $\mathcal L u=0$ where 
$$\mathcal L=\Delta-\frac{n-2}{4(n-1)}R$$ 
is the conformal Laplacian with respect to $g$.
As in Appendix A of \cite{capacity fill ins} we have the following asymptotic behavior of $u$ at $\infty$:

\begin{lemma}\label{lemma4.1}
There exists a constant $\mathcal D$ such that 
\[
u(x)=\frac{\mathcal D}{|x|^{n-2}}+\mathcal O(|x|^{-\gamma})
\]
where $\gamma:=\min(q-2,n+\tau-2,n-1)>n-2$.
\end{lemma}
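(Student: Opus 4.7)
The plan is a two-step bootstrap: first obtain a preliminary decay rate for $u$ by a maximum principle comparison, and then apply the Euclidean Green's representation formula on the asymptotic end to extract the leading $\mathcal{D}|x|^{-(n-2)}$ term and estimate the remainder. This is essentially the strategy referenced from Appendix A of \cite{capacity fill ins}, adapted to the conformal Green's function setting.

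First I would establish a crude decay $u(x) = \mathcal{O}(|x|^{-(n-2)})$. Since $\mathcal{L} u = 0$ with $u \to 0$ at infinity and the potential term $\frac{n-2}{4(n-1)} R$ decays like $|x|^{-q}$ with $q > n$, one can compare $u$ to the explicit supersolution $C |x|^{-(n-2)}$ on the asymptotic end; using that $g = \delta + \mathcal{O}(|x|^{-\tau})$ and $R = \mathcal{O}(|x|^{-q})$, the operator $\mathcal{L}$ applied to $|x|^{-(n-2)}$ is $\mathcal{O}(|x|^{-\tau - n}) + \mathcal{O}(|x|^{-q-n+2})$, which is strictly smaller than the leading order allowing one to close the barrier argument. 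Interior Schauder estimates, scaled at points $x$ with $|x| = r$, then yield $|\nabla u| = \mathcal{O}(|x|^{-(n-1)})$ and $|\nabla^2 u| = \mathcal{O}(|x|^{-n})$.

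Next I would recast the equation near infinity as $\Delta_\delta u = F$ where
\[
F = (\Delta_\delta - \Delta_g) u + \frac{n-2}{4(n-1)} R u .
\]
Using the asymptotic decay of $g - \delta$ and its derivatives together with the derivative bounds from the previous step, the first term is $\mathcal{O}(|x|^{-\tau-n})$ and the second is $\mathcal{O}(|x|^{-q-n+2})$. On the asymptotic end $\{|x| > R_0\}$, Green's representation then writes $u$ as the sum of a Newtonian potential $\mathcal{N}[F]$ and a Euclidean harmonic function $h$ (determined by boundary values on $\{|x| = R_0\}$) that vanishes at infinity. The harmonic piece admits the standard spherical-harmonic expansion $h(x) = \mathcal{D}|x|^{-(n-2)} + \mathcal{O}(|x|^{-(n-1)})$. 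For the Newtonian potential, Taylor-expanding the kernel $|x-y|^{2-n}$ in the region $|y| < |x|/2$ isolates a monopole part $c_n |x|^{-(n-2)} \int F \, dy$, which can be absorbed into the constant $\mathcal{D}$; the remaining pointwise estimate, using that $F$ decays strictly faster than $|y|^{-n}$, gives a remainder of size $\mathcal{O}(|x|^{-(q-2)}) + \mathcal{O}(|x|^{-(\tau+n-2)})$. Combining the two contributions yields the claimed expansion with $\gamma = \min(q-2, n+\tau-2, n-1)$.

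The main obstacle is keeping careful track of the remainder in the Newtonian potential: one must separately handle the near-field piece $|y| < |x|/2$ (where Taylor expansion of the kernel cleanly produces the monopole term plus a higher-order remainder controlled by $\int |y| |F(y)| dy$), and the far-field piece $|y| \ge |x|/2$ (where one bounds the integral directly using the decay of $F$). The three components of $\gamma$ have distinct origins: $q-2$ arises from the convolution of the Green's kernel with the $Ru$ source, $n+\tau-2$ arises from the convolution with $(\Delta_\delta-\Delta_g)u$, and $n-1$ arises from the next-order term in the spherical-harmonic expansion of the Euclidean harmonic component $h$. Each is strictly larger than $n-2$ thanks to $q > n$, $\tau > 0$, and $n \ge 3$, which confirms $\gamma > n - 2$.
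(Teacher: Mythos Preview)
Your proposal is correct and follows essentially the same approach as the paper: a barrier argument for the crude $|x|^{-(n-2)}$ decay, Schauder estimates for the derivatives, and then a Newtonian-potential representation to extract the leading monopole and bound the remainder. The only cosmetic difference is organizational: the paper extends $u$ smoothly across $B_{R_0}$ and works with the Newtonian potential over all of $\R^n$ (so the ``harmonic piece'' is absorbed into the integral over the ball, yielding the $\mathcal{O}(|x|^{-(n-1)})$ term via the estimates corresponding to your kernel Taylor expansion), whereas you stay on the exterior domain and carry the Euclidean-harmonic boundary contribution $h$ separately. One small imprecision: since $\Delta_\delta |x|^{-(n-2)}=0$, there is no ``leading order'' to dominate the error $\mathcal{L}|x|^{-(n-2)}$; you need the standard correction $a r^{-(n-2)} - r^{-(n-2)-\epsilon}$ (as the paper does) or an $\epsilon$-perturbed exponent to get a strict supersolution, but this is a routine fix.
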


\begin{proof}
Let $\R^n\setminus B_{R_1}(0)$ be the asymptotically flat end.
We start by construction a barrier to the conformal Green's function.
For this purpose let $\psi=ar^{-n+2}-r^{-n+2-\epsilon}$ with $a,\epsilon>0$.
 Then we have as in the proof of Lemma A2 in \cite{capacity fill ins}
\begin{align*}
\mathcal L\psi
=&(-n+2-\epsilon)(-n+1-\epsilon)r^{-n-\epsilon}+\mathcal O(|x|^{-\kappa}),
\end{align*}
where $\kappa:=\min(n+\tau,q)>n$.
Choosing $\epsilon<\min(\tau,n-q)$ there is an $R_2\ge R_1$ independent of $a$ such that $\Delta\psi\le0$ on $\R^3\setminus B_{R_2}(0)$ .
Moreover, we set $a\gg1$ such that $\psi>u$ on the complement of $\R^n\setminus B_{R_2}(0)$ in $M$.
Hence we have by the maximum principle $u\le\psi\le ar^{-n+2}$ and similarly $u\ge- ar^{-n+2}$.

By the Schauder Estimates, Theorem 6.2 in \cite{gilbargtrudinger}, we have
\begin{align*}
|u|\le  C|x|^{-\kappa},\quad |\nabla u|\le C|x|^{-\kappa-1},\quad|\nabla^2 u|\le C|x|^{-\kappa-2}
\end{align*}
 where $C$ denotes some constant which may change in the following computations from line to line.
We extend $u$ smoothly onto $\R^n$ such that $u\le C$ in $B_{R_0}(0)$ with $R_0\le R_1$.
Let $f=\hat \Delta u=\mathcal O(|x|^{-\kappa})$, where $\hat\Delta$ is the Euclidean Laplacian, and define
\begin{align*}
w(x):=-\frac1{n(n-2)\alpha_n}\int_{\R^n}\frac1{|x-y|^{n-2}}f(y)dy
\end{align*}
where $\alpha_n$ is the volume of the unit ball in $\R^n$. 
This is well defined due due to the decay properties of $f=\hat\Delta u$.
Also, observe that $\hat\Delta w=f$.
Next, we compute as in the proof of Lemma A2 in the Appendix A of \cite{capacity fill ins} denoting $|x|=r$

\begin{align}\label{4.1}
\int_{B_r(x)\setminus B_r(0)}\frac1{|x-y|^{n-2}}f(y)dy\le& Cr^{-\kappa}\int_{B_r(x)\setminus B_r(0)}\frac1{|x-y|^{n-2}}dy\le Cr^{-\kappa +2}
\end{align}
and
\begin{align}\label{4.2}
\int_{\R^n\setminus (B_r(0)\cup B_r(x))}\frac1{|x-y|^{n-2}}f(y)dy\le &Cr^{-n+2}\int_{\R^3\setminus( B_r(0)\cup B_r(x))}|y|^{-n-\tau}\le Cr^{-\kappa +2}.
\end{align}

Moreover, we additionally split the integral over $B_r(0)$ into $B_r(0)\setminus B_{R_0}(0)\cup B_{R_0}(0)$:

\begin{align}\label{4.3}
&\int_{ B_r(0)\setminus B_{R_0}(0)}\left(\frac1{|x-y|^{n-2}}-\frac1{|x|^{n-2}}\right)f(y)dy\nonumber\nonumber\\
=&\int_{B_{R_0}(0)\setminus B_\rho(0)}\frac{|x|^{2n-4}-|x-y|^{2n-4}}{|x|^{n-2}|x-y|^{n-2}(|x|^{n-2}+|x-y|^{n-2})}f(y)dy\nonumber\nonumber\\
\le&\sum_{j=1}^{2n-4}\int_{B_r(0)\setminus B_{R_0}(0)}C\frac{|y|^{-\kappa+j}}{|x|^{n-2+j}}
\le  C r^{-n+1}.
\end{align}

Similarly, we have
\begin{align}\label{4.4}
&\int_{ B_{R_0}(0)}\left(\frac1{|x-y|^{n-2}}-\frac1{|x|^{n-2}}\right)f(y)dy\nonumber\\
=&\int_{B_{R_0}(0)}\frac{|x|^{2n-4}-|x-y|^{2n-4}}{|x|^{n-2}|x-y|^{n-2}(|x|^{n-2}+|x-y|^{n-2})}f(y)dy\nonumber\\
\le&\sum_{j=1}^{2n-4}\int_{B_{R_0}(0)}C\frac{|y|^{k}}{|x|^{n-2+k}}
\le Cr^{-n+1}.
\end{align}

Lastly, we note
\begin{align}\label{4.5}
\frac1{|x|^{n-2}}\int_{\R^n\setminus B_r(0)}f(y)dy\le Cr^{-\kappa+2}.
\end{align}

Combining equations \eqref{4.1}-\eqref{4.5}, we obtain
\begin{align*}
-n(n-2)\alpha_nw=\frac{\int_{\R^n}\hat\Delta u}{r^{n-2}}+\mathcal O(|x|^{-\min(q-2,n+\tau-2,n-1)}).
\end{align*}

In particular, we have by the maximum principle $w=u$.
As in \cite{capacity fill ins}, Schauder theory, Theorem 6.3 in \cite{gilbargtrudinger}, gives the higher order estimates.
\end{proof}

Without loss of generality we may assume $R_1=1$, i.e. the asymptotically flat end is diffeomorphic to $\R^n\setminus B_1(0)$.
Next, we introduce the Kelvin transform $\mathcal K$ and prove the following elementary lemma:

\begin{lemma}\label{lemma-elementary}
Let $(M^n,g)$ be asymptotically flat, i.e. $g_{ij}=\delta_{ij}+\sigma_{ij}$ where $\sigma_{ij}=\mathcal O_2(|x|^{-\tau})=\mathcal O_2(|y|^{\tau}) $and $\mathcal K(x):=\frac x{|x|^2}$.
Then we have for  $y=\mathcal K(x)$ and $h_{ij}=g(\partial_iy,\partial_jy)$ 
\begin{align*}
h_{ij}=&|y|^{-4}\delta_{ij}+\mathcal O(|y|^{\tau-4}).
\end{align*}
Moreover, we have
\[
\partial_kh_{ij}=-4|y|^{-5}\partial_k|y|\delta_{ij}+\mathcal O(|y|^{\tau-5}).
\]
\end{lemma}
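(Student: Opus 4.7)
The plan is to compute $h_{ij}$ directly from the transformation law, exploiting the fact that the Kelvin transform is an involution. Since $\mathcal{K}(\mathcal{K}(x)) = x$, the inverse map is $x = y/|y|^2$, and I would first calculate the Jacobian
$$
\frac{\partial x^a}{\partial y^i} = \frac{1}{|y|^2}\left(\delta^a_i - \frac{2 y^a y^i}{|y|^2}\right) = \frac{1}{|y|^2}\, J^a_i(y),
$$
where $J^a_i := \delta^a_i - 2\hat y^a \hat y^i$ with $\hat y := y/|y|$. The key observation is that $J$ is the Euclidean reflection across the hyperplane perpendicular to $\hat y$, hence orthogonal, so that $J^a_i J^a_j = \delta_{ij}$. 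This is essentially the only nontrivial identity in the proof.

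Writing $g_{ab} = \delta_{ab} + \sigma_{ab}$ and decomposing
$$
h_{ij} = \delta_{ab}\frac{\partial x^a}{\partial y^i}\frac{\partial x^b}{\partial y^j} + \sigma_{ab}(x(y))\frac{\partial x^a}{\partial y^i}\frac{\partial x^b}{\partial y^j},
$$
the Euclidean piece evaluates to $|y|^{-4}\, J^a_i J^a_j = |y|^{-4}\delta_{ij}$ by the orthogonality identity. For the perturbation piece, I would combine the pointwise bound $|\partial x/\partial y| = \mathcal{O}(|y|^{-2})$ with $\sigma_{ab} = \mathcal{O}(|x|^{-\tau}) = \mathcal{O}(|y|^{\tau})$ (since $|x| = |y|^{-1}$ under the Kelvin transform) to conclude that this term contributes $\mathcal{O}(|y|^{\tau-4})$, yielding the first formula.

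For the derivative formula I would differentiate the two pieces separately with respect to $y^k$: the Euclidean piece produces the stated main term $-4|y|^{-5}\partial_k|y|\,\delta_{ij}$, and in the perturbation piece the derivative either lands on $\sigma_{ab}(x(y))$ — using $\partial_x\sigma = \mathcal{O}(|x|^{-\tau-1}) = \mathcal{O}(|y|^{\tau+1})$ together with a factor $\partial x/\partial y = \mathcal{O}(|y|^{-2})$ from the chain rule — or on a Jacobian factor, using $\partial^2 x/\partial y^2 = \mathcal{O}(|y|^{-3})$. In both cases, after pairing with the remaining $\mathcal{O}(|y|^{-2})$ factor from $\partial x/\partial y$, one obtains an error of order $\mathcal{O}(|y|^{\tau-5})$, matching the claim. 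The entire argument is essentially a homogeneity count; the only nonmechanical step is spotting the reflection identity $J^a_i J^a_j = \delta_{ij}$.
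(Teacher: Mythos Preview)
Your proposal is correct and follows essentially the same route as the paper: compute the Jacobian of the Kelvin transform, split $h_{ij}$ into the Euclidean piece and the $\sigma$-perturbation, and then read off the orders. The only cosmetic difference is that you explicitly name the orthogonality identity $J^a_iJ^a_j=\delta_{ij}$ for the reflection $J=\delta-2\hat y\,\hat y^{T}$, whereas the paper simply expands $g(\partial x/\partial y^i,\partial x/\partial y^j)$ by hand and collects the resulting $\sigma$-terms; both arrive at $|y|^{-4}\delta_{ij}+\mathcal O(|y|^{\tau-4})$ and then treat the derivative formula by the same homogeneity count.
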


\begin{proof}
We compute
\begin{align*}
h_{ij}=&g\left(\frac{\partial_ix}{|x|^2}-2\frac{\langle\partial_i x,x\rangle x}{|x|^4},\frac{\partial_jx}{|x|^2}-2\frac{\langle\partial_j x,x\rangle x}{|x|^4}\right)
\\=&|y|^{-4}[\delta_{ij}+\sigma_{ij}+|y|^{-4}(4y_iy_jy_ky_l\sigma^{kl}-2|y|^2(y_iy^k\sigma_{jk}+y_jy^k\sigma_{ik}))]
\\=&|y|^{-4}\delta_{ij}+\mathcal O(|y|^{\tau-4}).
\end{align*}
The second statement follows analogously.
\end{proof}

Next, we study the metric $\tilde g=u^{\frac 4{n-2}}g$.
A priori $\tilde g $ is defined on $\R^n\setminus B_1(0)$ but by inverting the coordinates via the Kelvin transform $\mathcal K$ we may view $\tilde g$ as metric on $B_1(0)$.
Thereby the point $\infty$ of the one point compactification corresponds to the origin under the Kelvin transform and we wish to extend $\tilde g$ there.
This is done in the following lemma which is based on Lemma 6.1 in \cite{capacity fill ins}.
Moreover, we also obtain some regularity for $\tilde g$ which is necessary in order to perform the smoothing procedure below.

\begin{lemma} \label{lem-pt}
The metric $\tilde g$ extends continuously across the origin to a $W^{1,p}$ metric for some $p>n$.
\end{lemma}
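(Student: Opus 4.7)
The plan is to combine the asymptotic expansion of $u$ from Lemma~\ref{lemma4.1} with the Kelvin pullback formula of Lemma~\ref{lemma-elementary} and exhibit an exact cancellation of the singularities at $y=0$. Heuristically, $u^{4/(n-2)}\sim \mathcal D^{4/(n-2)}|x|^{-4}$ and the Kelvin pullback $h_{ij}\sim |y|^{-4}\delta_{ij}=|x|^{4}\delta_{ij}$, so the two blow-ups cancel, leaving a finite limit proportional to $\delta_{ij}$ at the origin.

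To make this precise, I would first upgrade Lemma~\ref{lemma4.1} by Schauder theory (already invoked at the end of its proof) to an expansion of $u$ with matching control on its first derivative, with remainders of order $O(|x|^{-\gamma})$ and $O(|x|^{-\gamma-1})$ respectively. Factoring out the leading $\mathcal D|x|^{-(n-2)}$, expanding $(1+w)^{4/(n-2)}$ for small $w$, and pulling back under $y=x/|x|^{2}$, one obtains
$$
u^{4/(n-2)}\bigl(x(y)\bigr)=c\,|y|^{4}+E(y),\qquad c:=\mathcal D^{4/(n-2)},
$$
with $E=O(|y|^{4+\alpha})$ and $\partial_{y}E=O(|y|^{3+\alpha})$, where $\alpha:=\gamma-(n-2)>0$. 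Combining this with Lemma~\ref{lemma-elementary}, which gives $h_{ij}(y)=|y|^{-4}\delta_{ij}+r_{ij}(y)$ with $r_{ij}=O(|y|^{\tau-4})$ and $\partial_{y}r_{ij}=O(|y|^{\tau-5})$, the leading product $c|y|^{4}\cdot|y|^{-4}\delta_{ij}=c\,\delta_{ij}$ is constant in $y$, while the three cross-terms are $O(|y|^{\tau})$, $O(|y|^{\alpha})$, and $O(|y|^{\tau+\alpha})$. Therefore
$$
\tilde g_{ij}(y)=c\,\delta_{ij}+R_{ij}(y),\qquad R_{ij}=O(|y|^{\tau'}),\ \partial_{y}R_{ij}=O(|y|^{\tau'-1}),
$$
where $\tau':=\min(\alpha,\tau)>0$.

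Defining $\tilde g_{ij}(0):=c\,\delta_{ij}$ then produces a continuous extension of $\tilde g$ across the origin. For the Sobolev claim, the estimate $\partial_{y}\tilde g_{ij}=O(|y|^{\tau'-1})$ gives $\partial_{y}\tilde g_{ij}\in L^{p}(B_{1}(0))$ whenever $p(1-\tau')<n$; since $\tau'>0$, this range contains some $p>n$, yielding the desired $W^{1,p}$ regularity. The main technical nuisance is bookkeeping: each product generates several error terms, and one must check that differentiating through the Kelvin map costs exactly one factor of $|y|^{-1}$ (and no worse), so that the exponent $\tau'-1$ in the derivative estimate is genuinely attained. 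Otherwise the argument closely mirrors Lemma~6.1 in \cite{capacity fill ins}, referenced in the statement of the lemma.
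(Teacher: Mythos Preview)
Your proposal is correct and follows essentially the same route as the paper's proof: both combine the expansion of $u$ from Lemma~\ref{lemma4.1} (plus Schauder control on its derivative) with the Kelvin pullback formula of Lemma~\ref{lemma-elementary}, observe that the leading singularities $u^{4/(n-2)}\sim c|y|^{4}$ and $h_{ij}\sim |y|^{-4}\delta_{ij}$ cancel exactly, and then estimate the remaining cross-terms and their $y$-derivatives to obtain $\tilde g_{ij}(y)=c\,\delta_{ij}+O(|y|^{\gamma-(n-2)})$ with derivative $O(|y|^{\gamma-(n-2)-1})$. Your $\tau'=\min(\alpha,\tau)$ is in fact just $\alpha=\gamma-(n-2)$, since $\gamma\le n+\tau-2$ forces $\alpha\le\tau$; with that simplification your exponents coincide with the paper's $\gamma+2-n$ and $\gamma-n+1$.
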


\begin{proof}
We compute in the coordinates $y=\mathcal K(x)$ for $h_{ij}$
\begin{align*}
u^{\frac 4{n-2}}h_{ij}
=&\mathcal D\delta_{ij}+\mathcal O(|y|^{\gamma+2-n}).
\end{align*}
Hence $h$ is continuous in the origin.
Next, we compute in a similar fashion
\begin{align*}
\partial_k(u^{\frac 4{n-2}}h_{ij})=&(4\mathcal D|y|^{n-7}+\mathcal O(|y|^{\gamma-5}))(\mathcal D|y|^{n-2}+\mathcal O(|y|^{\gamma}))^{\frac {6-n}{n-2}}\partial_k|y|
\\&+(-4\mathcal D|y|^{n-7}+\mathcal O(|y|^{\gamma-5})(\mathcal D|y|^{n-2}+\mathcal O(|y|^{\gamma}))^{\frac {6-n}{n-2}}\partial_k|y|
\\=&\mathcal O(|y|^{\gamma-n+1}).
\end{align*}
Thus $u^4h_{ij}\in W^{1,p}(B_1(0))$ for $p= \frac{n}{\min(q-n-1,\tau-1)}>n$.
\end{proof}

Now we are in a position to verify the following Proposition, which is known to 
people who are familiar with the work in  \cite{corners, shi tam2, lee, corners2, shi tam, singularPMT, capacity fill ins}.

\begin{proposition} \label{prop-corner-pt}
Suppose $(M^n, g)$ has corner singularity along a  hypersurface 
$\Sigma$ with $ H_1 \le H_2 $  as in Theorem \ref{thm-corner}. 
Also,  suppose  there is  a point singularity $ q \in  M_2$ where $g$ is  in $W^{1,p}$ for some $p>n$ near $q$.
If $g$ has  non-negative scalar curvature away from $\Sigma$ and $ \{ q \}$,
then $\m\ge0$ with equality if and only if $(M_1,g)$ is isometric to $(\R^n\setminus \Omega,\delta)$ 
for a bounded domain $\Omega$ with smooth boundary.
\end{proposition}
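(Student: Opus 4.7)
The plan is to combine the two established singular positive mass theorems — Theorem \ref{thm-corner} for hypersurface corners and Theorem \ref{thm-shitam} for interior point singularities — by a smoothing and conformal-deformation argument. The key observation is that the corner $\Sigma$ and the point singularity $q$ are disjoint, since $q$ lies in the open interior of $M_2$ while $g$ is smooth in a neighborhood of $\Sigma$; hence the two singularities can be resolved by independent local modifications.

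I would first smooth the point singularity as in \cite{shi tam}: for $\epsilon > 0$ small, mollify $g$ on a small ball $B_\epsilon(q) \subset M_2$ disjoint from $\Sigma$ to obtain a smooth family of metrics $g_\epsilon$ that agree with $g$ outside $B_\epsilon(q)$. Because the modification is supported away from $\Sigma$, the corner structure and the mean-curvature inequality $H_1 \le H_2$ are preserved exactly, and the ADM mass is unchanged. Using $g \in W^{1,p}$ with $p > n$, the Shi-Tam analysis shows that the negative part $R(g_\epsilon)^-$ of the scalar curvature can be made arbitrarily small in $L^{n/2}$ as $\epsilon \to 0$.

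Next, to restore nonnegative scalar curvature I would solve $\mathcal L_{g_\epsilon} u_\epsilon = 0$ with $u_\epsilon \to 1$ at infinity and set $\tilde g_\epsilon = u_\epsilon^{4/(n-2)} g_\epsilon$. The smallness of $R(g_\epsilon)^-$ forces $u_\epsilon \to 1$ in $C^2_{\mathrm{loc}}$ away from $\Sigma$, so $\tilde g_\epsilon$ has zero scalar curvature on each side of $\Sigma$, the transformed mean curvatures satisfy $\tilde H_1^\epsilon \le \tilde H_2^\epsilon + o(1)$, and $\m(\tilde g_\epsilon) \to \m$. An arbitrarily small additional perturbation on the $M_2$ side (bending the metric near $\Sigma$ so as to increase $\tilde H_2^\epsilon$ by a fixed amount that absorbs the $o(1)$ deficit) restores a strict corner inequality. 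Theorem \ref{thm-corner} applied to the perturbed $\tilde g_\epsilon$ yields $\m(\tilde g_\epsilon) \ge 0$, and $\epsilon \to 0$ gives $\m \ge 0$.

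The main obstacle is the rigidity case $\m = 0$, since Theorem \ref{thm-corner}'s rigidity is a conclusion for an exact-zero-mass approximation rather than in the limit. I would address this by a diagonal-limiting argument: since the smoothing leaves $(M_1, g)$ pointwise unchanged, a quantitative/stability form of the corner rigidity — or, alternatively, running the conformal compactification of Section \ref{sec-H-flat} directly on $(M, g)$, using Lemma \ref{lem-pt} to handle the point singularity and then appealing to the rigidity of Theorem \ref{thm-corner} on the smooth compactified manifold — propagates through the limit and identifies $(M_1, g)$ with $(\R^n \setminus \Omega, \delta)$ for some smooth bounded domain $\Omega$. No conclusion about $(M_2, g)$, which still contains the point singularity $q$, is claimed.
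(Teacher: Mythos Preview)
Your route to $\m \ge 0$ is valid and close in spirit to the paper's, but organized differently. The paper smooths \emph{both} singularities at once---the corner via Proposition~3.1 of \cite{corners} and the point via \cite{shi tam} and \cite{capacity fill ins}---obtaining a globally smooth metric whose $\int R^-$ is arbitrarily small, then conformally deforms and applies the smooth positive mass theorem. You instead smooth only at $q$ and feed the resulting cornered manifold into Theorem~\ref{thm-corner} as a black box. This works, and in fact your hedge ``$\tilde H_1^\epsilon \le \tilde H_2^\epsilon + o(1)$'' together with the auxiliary bending is unnecessary: since $g_\epsilon$ is Lipschitz across $\Sigma$, the divergence-form equation for $u_\epsilon$ gives $u_\epsilon \in C^{1,\alpha}$ there, the normal derivative matches from both sides, and $\tilde H_1^\epsilon - \tilde H_2^\epsilon = u_\epsilon^{-2/(n-2)}(H_1 - H_2) \le 0$ exactly. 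Note also that solving the full conformal Laplace equation yields only $\m(\tilde g_\epsilon) \le \m + o(1)$ (not $\to \m$), which is all you need.

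The real gap is rigidity. Your option (a), a stability form of Theorem~\ref{thm-corner}, is not available off the shelf, and the approximants $\tilde g_\epsilon$ have mass merely $\ge 0$, not $= 0$, so the rigidity clause of Theorem~\ref{thm-corner} never fires for them. Your option (b) is circular: the manifold you propose to compactify and analyze is precisely the manifold of Proposition~\ref{prop-corner-pt}, still carrying the $W^{1,p}$ point singularity at $q$, so it does not meet the smoothness hypothesis of Theorem~\ref{thm-corner}. The paper closes this gap by invoking the Ricci-flow argument of McFeron--Sz\'ekelyhidi \cite{corners2} (cf.\ Section~7 of \cite{shi tam}): short-time Ricci flow with initial data $g$ instantly smooths both $\Sigma$ and $q$, keeps the ADM mass \emph{exactly} constant, and yields $R \ge 0$; smooth rigidity then forces the flowed metric to be Euclidean, and letting $t \to 0^+$ on $M_1$ gives the conclusion. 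This mass-preserving smoothing is the idea your proposal is missing.
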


\begin{proof}
Due to the assumption $H_1\le H_2$ at  $\Sigma$, we may exactly follow  Proposition 3.1  in \cite{corners} to approximate $g$ by a family of smooth metrics $g_\delta$ such that $g_\delta(x)=g(x)$ for $\operatorname{dist}(\Sigma,x)\ge \delta$ and the scalar curvature $R_\delta$ satisfies
\begin{align}\label{4.6}
R_\delta(x)\ge -C
\end{align}
for $\operatorname{dist}(\Sigma,x)< \delta$. 
Equation \eqref{4.6} in particular shows that the integral of the  negative part of the scalar curvature  can be made arbitrarily small during the approximation process.

Near the point singularity $q$, we approximate $g$ as in \cite{shi tam} Lemma 4.1 and \cite{capacity fill ins} Lemma 3.6 to obtain smooth metrics $ \{g_\epsilon\}$ so that 
 $g_\epsilon=g$ outside $B_\epsilon(\{q\})$, $\|g_\epsilon\|_{W^{1,p}(B_\epsilon(\{q\})}\le C$ where $C$ is independent of $\epsilon$.
By  Lemma 3.7 in \cite{capacity fill ins},   the uniform $W^{1,p}$ bound on $g_\epsilon$ implies that the integral of the negative part of $ R_{g_\epsilon} $ over $B_\epsilon(\{q\})$ becomes arbitrarily small.
(We  note that Lemma 3.7 in \cite{capacity fill ins} is stated in a slightly more general version and for our purpose the result already follows from the estimate on the term  $\bold I_B$ in Lemma 3.7 in \cite{capacity fill ins}.)

Since both approximations $g_\epsilon$ and $g_\delta$ are local, we can perform them simultaneously to approximate $g$ by a smooth metric with $\int_M R^-$ becoming arbitrarily small. Here $R^-$ denotes the negative 
part of the scalar curvature. 

Now we proceed as usual and conformally transform $M$ such that $R\ge0$ everywhere, see \cite{old7} and section 4  in \cite{corners}.
Thereby the mass converges as in \cite{corners} and we may deduce the mass is non-negative by the standard positive mass theorem.

If the mass is zero, we need to apply the argument  from \cite{corners2}.
This is due to \cite{corners2} relying on Ricci flow which has the advantage that the mass stays constant during the smoothing process  so we can apply the rigidity statement of the  positive mass theorem to the 
flow solution with initial data $g$
(see also section 7 of \cite{shi tam}).
This shows the rigidity of the Proposition. 
\end{proof}

\begin{proof}[Proof of Theorem \ref{thm-intro}]
Due to Lemma \ref{lem-pt}, the conformally filled in manifold $(\hat M, \hat g) $  constructed in 
Section \ref{sec-H-flat} satisfies the conditions of Proposition \ref{prop-corner-pt}.
Hence Theorem \ref{thm-intro} follows from Proposition \ref{prop-corner-pt} in the same way that the harmonically flat case 
is proven in Section \ref{sec-H-flat}.
\end{proof}

\section{Application} \label{sec-app}

In this section, we prove Theorem \ref{thm-sm} which generalizes H.~Bray's result in Theorem \ref{thm-bray}.

\begin{proof}[Proof of Theorem \ref{thm-sm}]

By the maximum principle, $ \phi > -1 $ on $M$. 
Define 
$$   w =  \frac{ 2}{1+ \phi } \ \ \mathrm{and} \ \ 
 \tilde g = w^{- \frac{4}{n-2} } g . $$
$ (M^n, \tilde g)$ is asymptotically flat with nonnegative scalar curvature. 
The fact $ \Delta w^{-1} = 0 $ implies 
$$
\tilde \Delta w = 0 . 
$$
Moreover, $ w \to 1$ at $\infty$ and $ w = \frac{2}{1+c} $ at $ \Sigma$.
Next, define 
$$
v = \frac{1+c}{1-c} ( w - 1) . 
$$
Then 
$$
\tilde \Delta v = 0, \ \ 
v \to 0 \ \mathrm{at} \ \infty, \ \ 
\mathrm{and} \ v = 1 \ \mathrm{at} \ \Sigma . 
$$
Let $ \tilde H$ be the mean curvature of $ \Sigma$ 
in $(M, \tilde g)$ with respect to the $\infty$-pointing unit normal $\tilde \nu$.
By (3.2), it follows that
\begin{align*}
\tilde H =& w^\frac{2}{n-2} \left[ H + \frac{2(n-1)}{(n-2)} w\nabla_\nu w^{-1} \right]  ,\\
 \tilde \nu =& w^\frac{2}{n-2} \nu  ,
\end{align*}
and
$$
\nabla_{\tilde \nu} v= w^\frac{2}{n-2} \nabla_\nu v 
$$
Hence, at $ \Sigma$, by \eqref{eq-bd-c-f}, 
\begin{align*}
\begin{split}
& \ \tilde H + \frac{n-1}{n-2} \nabla_\nu v \\
= & \ w^\frac{2}{n-2} \left[ H + \frac{(n-1)}{(n-2)} 
 \left( 2 w \nabla_\nu w^{-1} + \nabla_\nu v \right) \right] \\
= & \ w^\frac{2}{n-2} \left[ H -  \frac{(n-1)}{(n-2)} 
 \frac{2c }{ (1 - c^2 )} \nabla_\nu v\right] \\
 \le & \ 0 .
\end{split}
\end{align*}
Thus, by Corollary \ref{cor-intro}, 
\begin{align}\label{5.1} 
 \tilde \m \ge 0
\end{align}
where $ \tilde \m $ is the ADM mass of $(M^n, \tilde g)$.
Since by the definition of mass  $ \tilde \m $ and $ \m $ are related by $\tilde \m = \m - C $, we conclude from equation \eqref{5.1}
that
$$
\m \ge C .
$$

If $ \m = C $, then $ \tilde \m = 0 $. By Corollary \ref{cor-intro}, $(M^n, \tilde g)$ is isometric to
$ \left(  \R^n \setminus \{ | x | < r_0 \} , \delta \right) $ for some constant $ r_0 > 0 $.
In this case, $ w $ is a Euclidean  harmonic function that goes to $ 1$ at $\infty$ and 
equals a positive constant $ \frac{2}{1+ c} $ at $ \Sigma = \{ | x | = r_0 \}$. Hence, 
$$
w = 1 + \frac{ \m}{2  | x|^{n-2} }  ,
$$
where $ \m$ is a constant satisfying $ \displaystyle \m =   \frac{ (1-c) }{1+c}  2 r_0^{n-2} $.
This completes the proof.
\end{proof}

It is worth of pointing out the following equivalent form of Theorem \ref{thm-sm}.

\begin{theorem}  \label{thm-sm-2}
Let $(M^n,g)$ be an $n$-dimensional, asymptotically flat  manifold, with non-negative scalar curvature, with  boundary $\Sigma$. 
Let $\varphi (x) $ be a function on $(M, g)$ which satisfies
\begin{align*}
\left\{ 
\begin{array}{rcc}
\Delta \varphi & = & 0  \ \ \mathrm{in} \ M \\
\varphi & \to & 1 \ \ \mathrm{at} \ \infty \\ 
\varphi & = & 0 \ \ \mathrm{at} \ \Sigma .
\end{array}
\right.
\end{align*}
Let $ H$ be the mean curvature of $\Sigma$ in $(M^n, g)$ with respect
to the $\infty$-pointing normal $\nu$. 
If there is a constant $ c > -1$ such that 
\begin{align}\label{5.2}
\frac{2c}{1 + c } \nabla_\nu\varphi \ge \frac{n-2}{n-1} H,
\end{align}
then 
\begin{align}\label{5.3}
 \m  \ge (1-c) \mathcal{C} ,
\end{align}
where $\m $ is the ADM mass of $(M^n, g)$ and  $ \mathcal{C}$ is the constant in
$$
\varphi = 1 - \frac{\mathcal{C}}{|x|^{n-2} } + o(|x|^{2-n} ), \ \mathrm{as} \ x \to \infty. 
$$
Moreover, equality in \eqref{5.3} holds if and only if $(M^n, g)$ is isometric to 
an $n$-dimensional spatial Schwarzschild manifold outside a rotationally symmetric sphere, i.e. 
$$ \left( \R^n \setminus \left\{ | x | < r_0 \right\} , \left( 1 + \frac{\m}{2 |x|^{n-2} } \right)^\frac{4}{n-2} \delta_{ij}  \right)\quad\text{for some constants $ r_0 > 0 $.}
$$
\end{theorem}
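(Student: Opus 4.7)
The plan is to recognize Theorem \ref{thm-sm-2} as a reformulation of Theorem \ref{thm-sm} (for $c\ne 1$) and of Corollary \ref{cor-intro} (for $c=1$), obtained via the affine substitution that converts the normalization ``$\varphi=0$ on $\Sigma$, $\varphi\to 1$ at $\infty$'' of Theorem \ref{thm-sm-2} into the normalization ``$\phi=c$ on $\Sigma$, $\phi\to 1$ at $\infty$'' of Theorem \ref{thm-sm}. The whole proof is then an algebraic verification that hypothesis, mass coefficient, and rigid model correspond.

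Case $c\neq 1$. I would set
\[
\phi \ = \ c + (1-c)\varphi,
\]
so that $\phi$ is harmonic on $M$, tends to $1$ at infinity, equals $c$ on $\Sigma$, and satisfies $\nabla_\nu\phi=(1-c)\nabla_\nu\varphi$. The hypothesis \eqref{5.2} then translates into
\[
\frac{2c}{1-c^2}\nabla_\nu\phi \ = \ \frac{2c(1-c)}{(1-c)(1+c)}\nabla_\nu\varphi \ = \ \frac{2c}{1+c}\nabla_\nu\varphi \ \ge \ \frac{n-2}{n-1}H,
\]
which is exactly \eqref{eq-bd-c-f}. From $\varphi = 1-\mathcal{C}|x|^{2-n}+o(|x|^{2-n})$ I read off
\[
\phi \ = \ 1 - (1-c)\mathcal{C}\,|x|^{2-n}+o(|x|^{2-n}),
\]
so the constant $C$ in Theorem \ref{thm-sm} equals $(1-c)\mathcal{C}$. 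Theorem \ref{thm-sm} then yields $\m \ge (1-c)\mathcal{C}$, and transports the Schwarzschild rigidity statement verbatim (since the metric $g$ is unchanged by the substitution).

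Case $c=1$. Here \eqref{5.2} reduces to $\nabla_\nu\varphi\ge\frac{n-2}{n-1}H$, and I would apply Corollary \ref{cor-intro} to the standard Green's function $v=1-\varphi$, which satisfies $\Delta v=0$, $v\to 0$ at infinity, $v=1$ on $\Sigma$, and $\nabla_\nu v=-\nabla_\nu\varphi$. The condition \eqref{condition2} of Corollary \ref{cor-intro} becomes immediate, and its conclusion gives $\m\ge 0=(1-c)\mathcal{C}$, with rigidity forcing $(M,g)\cong(\R^n\setminus\overline{B_{r_0}(0)},\delta)$, i.e.\ Schwarzschild with $\m=0$.

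There is essentially no obstacle in this argument beyond bookkeeping: the only care needed is to exclude $c=1$ from the substitution (since $\phi$ would be constant, the change of variable degenerates) and to treat that borderline case directly via Corollary \ref{cor-intro}. Everything else is the cancellation $\frac{2c(1-c)}{1-c^2}=\frac{2c}{1+c}$ and the matching of asymptotic coefficients.
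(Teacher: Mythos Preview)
Your proposal is correct and takes essentially the same approach as the paper: the paper's proof is a one-liner stating that for $c\ne 1$ one uses the substitution $\varphi=\frac{1}{1-c}(\phi-c)$ (equivalently your $\phi=c+(1-c)\varphi$) to reduce to Theorem~\ref{thm-sm}, and for $c=1$ the statement reduces to Corollary~\ref{cor-intro}. Your write-up simply spells out the algebraic verifications that the paper leaves implicit.
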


\begin{proof}
If $ c \neq 1$, the theorem follows  directly from Theorem \ref{thm-sm} by letting $ \varphi = \frac{1}{1 - c} (\phi - c) $. If $c = 1$, the theorem reduces to Corollary \ref{cor-intro}.
\end{proof}

\begin{remark}
For a constant $c > -1 $ satisfying \eqref{5.2} to exist, one needs to have
\[
2 \nabla_\nu\varphi  >  \frac{n-2}{n-1} H.
\]
If $n=3$,  this coincides with  the condition used in \cite[Theorem 1.5]{capacity fill ins}.
\end{remark}

\section{Examples and Discussions} \label{sec-end}

\begin{example}[Boundary with $H\le0$]
Every manifold $(M^n, g)$ whose boundary  $\p M$ has non-positive mean curvature satisfies condition \eqref{eq-main-bdry-cond} and thus has  positive mass $m$. 
More precisely, $ m \ge \mathcal C > 0$ by Theorem \ref{thm-sm}.
\end{example}

\begin{example}[Regions in  Schwarzschild manifold]
Let $(M^n, g) = ( \R^n\setminus B_{r_0}(0), (1+\frac {m}{2r^{n-2}})^{\frac4{n-2}}\delta$)
be a spacelike slice in Schwarzschild spacetime of mass $m$.
Note, $m$ is allowed to be negative.
We begin by computing the mean curvature of the boundary $S_{r_0}(0)$ using \eqref{3.2}
\begin{align*}
 H=&(2^{\frac{n}{n-2}}r_0^{n-1}-2^{\frac2{n-2}}mr_0)\frac{n-1}{(2r_0^{n-2}+m)^{\frac n{n-2}}}.
\end{align*}

Next, we observe that the Green's function is given by
\begin{align}\label{6.1}
u(r)=\frac{2r_0^{n-2}+m}{2r^{n-2}+m}.
\end{align}
Combining this with the observation $|\partial_r|_g=(1+\frac m{2r^{n-2}})^{\frac2{n-2}}$, we have at $S_{r_0}$
\begin{align*}
-\frac{n-1}{n-2}\nabla_\nu u
=&2^{\frac n{n-2}}r_0^{n-1}\frac{n-1}{(2r_0^{n-2}+m)^{\frac n{n-2}}}.
\end{align*}
Thus we have as predicted by the rigidity statement of Theorem \ref{thm-sm}
\begin{align*}
H=-\frac{2c(n-1)}{(1+c)(n-2)}\nabla_\nu u
\end{align*}
for 
\[
c=\frac{2r_0^{n-2}}{2r_0^{n-2}+m}.
\]
Moreover, $\frac{2c}{c+1}$ can approach $1$ while maintaining negative mass which shows that Theorem \ref{thm-intro} is sharp in this sense.
\end{example}

\begin{example}[Conformal minimal boundary] \label{ex-conf-m-bdry}
Suppose that $g=\phi^{\frac{4}{n-2}}\tilde g$ such that $\Sigma $ is a minimal surface with respect to $\tilde g$ where $\phi$ solves
$$
\left\{ 
\begin{array}{rcc}
\Delta \phi & = & 0  \ \ \mathrm{in} \ M \\
\phi & \to & 2 \ \ \mathrm{at} \ \infty \\ 
\phi & = & 1 \ \ \mathrm{at} \ \Sigma .
\end{array}
\right.
$$

Observe that $\tilde R$ has by equation \eqref{3.1} also non-negative scalar curvature.
Note that $v\phi$ is a harmonic function on $(M^n,\tilde g)$ satisfying $(\phi v)|_\Sigma=1$ and $(\phi v)(x)\to0$ for $|x|\to\infty$.
Precisely the same properties hold true for the function $(2-\phi)$ and thus we deduce by the uniqueness property for harmonic functions $\phi v=2-\phi$.
Therefore, we have at $\Sigma$
\[
\nabla_\nu v =-\frac2{v^2}\nabla_\nu \phi=-2\nabla_\nu \phi.
\]
Next, we compute using equation \eqref{3.2}
\[
H=\tilde H \phi^{-\frac 2{n-2}}+\frac{2(n-1)}{n-2}\phi^{-\frac n{n-2}}\nabla_\nu \phi=\frac{2(n-1)}{n-2}\nabla_\nu \phi=-\frac{n-1}{n-2}\nabla_\nu v.
\]
Hence condition (1.2) is satisfied with equality and we have $m\ge0$.
Conversely, suppose we have $H\equiv -\frac{n-1}{n-2}\nabla_\nu v$. 
Then, we define $\phi:=\frac2{1+v}$ and $\tilde g=\phi^{\frac{-4}{n-2}}g$.
Applying again \eqref{3.2}, we obtain
\begin{align*}
\tilde H=&H \phi^{\frac2{n-2}}+\frac {2(n-1)}{n-2}\phi^{\frac n{n+2}}\nabla_\nu (\phi^{-1})=H+\frac{n-1}{n-2}\nabla_\nu v=0.
\end{align*}
Thus $\Sigma$ is a minimal surface with respect to $\tilde g$.
Indeed, this calculation shows that  condition \ref{condition2}  holds if and only if  $\tilde H\le0$ under the above conformal transformation.
In particular, this suggests Bray's result and proof of Theorem \ref{thm-bray} in \cite{penrose2} can also be applied to 
derive Corollary \ref{cor-intro} and Theorem \ref{thm-sm} in the harmonically flat case.
\end{example}

Next, we want to relate Theorem \ref{thm-intro}, Theorem \ref{thm-sm} and Example \ref{ex-conf-m-bdry} to the Riemannian Penrose conjecture.
This conjecture was  shown by G.~Huisken and T.~Ilmanen \cite{penrose} (one black hole) and by H.~Bray \cite{penrose2} (multiple black holes).
The latter proof has been extended up to dimension 7 by H.~Bray and D.~Lee \cite{penrose3}.

One of the key steps in H.~Bray's proof is to prove the mass-capacity estimate, Theorem \ref{thm-bray}.
There an asymptotically flat manifold with non-negative scalar curvature is first reflected across its horizon, then conformally transformed via the Green's function so the positive mass theorem may be applied.
In view of the positive mass theorem for manifolds with boundary, the reflection argument can now be dropped.
More precisely, the asymptotically flat manifold can be directly conformally transformed via the Green's function as presented in Example \ref{ex-conf-m-bdry}.

As it turns out Theorem \ref{thm-bray} is the reason the proof of the Penrose inequality breaks down in higher dimension as discovered by H.~Bray.
This is due to minimal hypersurfaces being no longer regular which then in particular complicates the reflection argument.
Therefore, showing a positive mass theorem for manifolds with singular boundary may be considered a strategy for proving the higher dimensional Penrose inequality.

We also recall that M.~Herzlich \cite{penrose like, Herzlich3} showed a  positive mass theorem 
for asymptotically flat manifolds with boundary $\partial M$ whose mean curvature satisfies 
an upper bound which depends on the area of $ \partial M$ in $3$-dimension and 
depends on the Yamabe invariant $\mathcal Y$ of $\partial M$ in higher dimensions. 
More precisely, the latter case assumes
$$
H\le |\Sigma|^{-\frac1{n-1}}\sqrt{\frac{n-1}{n-2}\mathcal Y(\Sigma)}.
$$
The proofs in  \cite{penrose like, Herzlich3} make use of  estimates of the
first eigenvalue of the Dirac operator by  C.~Bär \cite{diraceigenvalue1} 
and by O. Hijazi \cite{diraceigenvalue2, HO2}. 
Results in  \cite{penrose like, Herzlich3} have been extended to the 
asymptotically hyperbolic setting by O. Hijazi, S. Montiel and S. Raulot \cite{HMR}.
It is plausible that Theorem \ref{thm-intro} or Corollary \ref{cor-intro} may have an analogue in 
the asymptotically hyperbolic setting.

\end{document}